 \newtheorem{thm}{Theorem}[section]
 \newtheorem{cor}[thm]{Corollary}
 \newtheorem{lem}[thm]{Lemma}
 \theoremstyle{definition}
 \newtheorem{defn}[thm]{Definition}
 \theoremstyle{remark}
 \newtheorem{rem}[thm]{Remark}
 \newtheorem*{ex}{Example}
 \numberwithin{equation}{section}
\begin{document}

\parbox{1mm}

\begin{center}
{\bf {\sc \Large On weighted strong type inequalities for the
generalized weighted mean operator}}
\end{center}

\vskip 12pt

\begin{center}
{\bf Ondrej HUTN\'IK}\footnote{{\it Mathematics Subject
Classification (2010):} Primary 26D10, 26A51, Secondary 47B38,
47G10
\newline {\it Key words and phrases:} Hardy's inequality, quasi-arithmetic mean, generalized
averaging operator, Muckenhoupt type condition}
\end{center}


\hspace{5mm}\parbox[t]{12cm}{\fontsize{9pt}{0.1in}\selectfont\noindent{\bf
Abstract.}  The generalized weighted mean operator
$\mathbf{M}^{g}_{w}$ is given by
$$[\mathbf{M}^{g}_{w}f](x)=
g^{-1}\left(\frac{1}{W(x)}\int_{0}^{x}w(t)g(f(t))\,\mathrm{d}t\right),$$
with $$W(x)=\int_{0}^{x} w(s)\,\mathrm{d}s, \quad \textrm{for } x
\in (0, +\infty),$$ where $w$ is a positive measurable function on
$(0,+\infty)$ and $g$ is a real continuous strictly monotone
function with its inverse $g^{-1}$. We give some sufficient
conditions on weights $u,v$ on $(0,+\infty)$ for which there
exists a positive constant $C$ such that the weighted strong type
$(p,q)$ inequality
$$\left(\int_{0}^{\infty}
u(x)\Bigl([\mathbf{M}^{g}_{w}f](x)\Bigr)^{q}\,\mathrm{d}x
\right)^{1 \over q} \leq C \left(
\int_{0}^{\infty}v(x)f(x)^{p}\,\mathrm{d}x \right)^{1 \over p}
$$ holds for every measurable non-negative function $f$, where the positive reals $p,q$ satisfy certain restrictions.
} \vskip 24pt

\section{Introduction and preliminaries}

In recent years the topic of Hardy type inequalities and their
applications seem to have grown more and more popular. Although
Hardy's original result is dated to the 1920's, some new versions
are stated and old ones are still being improved almost a century
later. One of the reasons of popularity of Hardy type inequalities
are their usefulness in various applications.

Hardy's original result was discovered in the course of attempts
to simplify the proofs of the well-known Hilbert's theorem, see
the historical part of~\cite{KMP} (cf. also~\cite{KP1}). Hardy
published his result in 1925 in paper~\cite{hardy} in the
following form:

If $p\in(1,+\infty)$ and $f$ is a non-negative function, then
\begin{equation}\label{hardy}
\int_{0}^{\infty}\Bigl([\mathbf{H}f](x)\Bigr)^{p}\,\mathrm{d}x
\leq \left(\frac{p}{p-1}\right)^{p} \int_{0}^{\infty}
f(x)^{p}\,\mathrm{d}x,
\end{equation} where $\mathbf{H}$ denotes the usual Hardy's averaging operator
\begin{equation}\label{hardyop}
[\mathbf{H}f](x):=\frac{1}{x}\int_{0}^{x} f(t)\,\mathrm{d}t, \quad
x>0.
\end{equation}The dramatic period of research until Hardy finally proved this result
was described by Kufner et al. in~\cite{KMP}. Afterwards Hardy
proved this inequality for sequences, for functions on the half
line, and for functions in Lebesgue spaces with power weights.
From the early 1970's onward a great many related results were
established under the general heading of Hardy type inequalities,
and a number of papers have been published providing new proofs,
improvements, refinements, generalizations and many applications,
see e.g.~\cite{hanjs}, \cite{heinig}, \cite{jain},
\cite{pachpatte}, and \cite{stepanov} to mention a few. Concerning
the history and development of inequality~(\ref{hardy}) we refer
the interested reader to the books~\cite{KP1} and~\cite{EE},
\cite{KP2} devoted to this subject from different viewpoints.

It is also well-known that the classical P\'olya-Knopp's
inequality, cf.~\cite{hardy-littlewood},
\begin{equation}\label{polyaknopp}
\int_{0}^{\infty} [\mathbf{G}f](x)\,\mathrm{d}x \leq \mathrm{e}
\int_{0}^{\infty}f(x)\,\mathrm{d}x,
\end{equation} where $\mathbf{G}$ denotes
the geometric mean operator defined as
\begin{equation}\label{geop}
[\mathbf{G}f](x) := \exp\left(\frac{1}{x}\int_{0}^{x}\ln
f(t)\,\mathrm{d}t\right), \quad x > 0,
\end{equation}
may be derived as a limiting case of the Hardy's
inequality~(\ref{hardy}) by changing $f \rightarrow f^{1 \over p}$
and tending $p \rightarrow \infty$, i.e.,
$$\lim_{p\rightarrow \infty}\Bigl([\mathbf{H}f^{1 \over p}](x)\Bigr)^{p}=[\mathbf{G}f](x),
\quad \textrm{and } \quad \lim_{p\rightarrow \infty}
\left(\frac{p}{p-1}\right)^{p}=\mathrm{e}.$$

In this note we propose the following general mean type inequality
problem: Let $p>0$, $0<q<+\infty$, and
\begin{equation}\label{hardyop2}
W(x):=\int_{0}^{x} w(s)\,\mathrm{d}s, \quad \textrm{for } x \in
(0, +\infty),
\end{equation}
where $w$ is a positive measurable function on $(0,+\infty)$.
\textsf{Find necessary and/or sufficient conditions on the
positive measurable functions $u, v$ (weights) and establish a
class of functions $g$ (a real continuous and strictly monotone
function with its inverse $g^{-1}$) so that the following general
mean type inequality}

\begin{equation}\label{problem}
\left(\int_{0}^{\infty}
u(x)\Bigl([\mathbf{M}^{g}_{w}f](x)\Bigr)^{q}\,\mathrm{d}x
\right)^{1 \over q} \leq C \left(
\int_{0}^{\infty}v(x)f(x)^{p}\,\mathrm{d}x \right)^{1 \over p},
\quad f \geq 0,
\end{equation}

\noindent \textsf{holds for a positive finite constant $C$}, where
\begin{equation}\label{p-genop}
[\mathbf{M}^{g}_{w}f](x):=
g^{-1}\left(\frac{1}{W(x)}\int_{0}^{x}w(t)g(f(t))\,\mathrm{d}t\right)
\end{equation}
\noindent is the \textit{generalized weighted mean operator}. In
the case of constant weight $w$ we also denote the
\textit{generalized} (non-weighted) \textit{mean operator} as
\begin{equation}\label{genop}
[\mathbf{M}^{g}f](x)=
g^{-1}\left(\frac{1}{x}\int_{0}^{x}g(f(t))\,\mathrm{d}t\right).
\end{equation}

It may be seen that the inequality~(\ref{problem}) is a natural
generalization of Hardy's inequality. In particular, in the case
$g^{-1}(x)=x$ the operator $\mathbf{M}^{g}_{w}$ reduces to the
weighted Hardy's averaging operator $\mathbf{H}_{w}$ and we have
the weighted version of Hardy's inequality~(\ref{hardy}). Putting
$g^{-1}(x)=\exp(x)$ we get the weighted geometric mean operator
$\mathbf{M}^{g}_{w}=\mathbf{G}_{w}$ and the related weighted form
of P\'olya-Knopp's inequality~(\ref{polyaknopp}). Note that the
integral operator $\mathbf{M}^{g}_{w}$ also generalizes the
harmonic mean operator, cf.~\cite{ortega}, the power mean
operator, cf.~\cite{jain}, and some other integral operators.

This paper consists of several observations (rather than
solutions!) on the stated general problem providing certain
sufficient conditions for validity of inequality~(\ref{problem}).
First, we will state some preliminary results and prove an
equivalency relation between two versions of the general mean type
inequality for the generalized weighted mean operator
$\mathbf{M}^{g}_{w}$ and its non-weighted variant
$\mathbf{M}^{g}$. Since such a reduction is possible, in the last
two sections we study only inequalities involving the integral
operator~(\ref{genop}) instead of~(\ref{p-genop}). Using certain
known methods we give some sufficient conditions for the
inequality~(\ref{problem}) to be valid.

\section{First observation: Jensen's inequality in action}

Jensen's inequality plays an important role when studying some
inequalities among different means and operators. In our notation
it has the following formulation.

\begin{lem}[Jensen's Inequality]\label{jensen}
Let $w,f$ be two non-negative integrable functions on
$(0,+\infty)$ such that $a<f(x)<b$ for all $x\in (0,+\infty)$,
where $-\infty\leq a < b \leq +\infty$.
\begin{itemize}
\item[(i)] If $g$ is a convex function on $(a, b)$, then
$g\bigl([\mathbf{H}_{w}f](x)\bigr)\leq [\mathbf{H}_{w}g\circ
f](x)$. \item[(ii)] If $g$ is a concave function on $(a, b)$, then
$[\mathbf{H}_{w}g\circ f](x) \leq
g\bigl([\mathbf{H}_{w}f](x)\bigr)$.
\end{itemize}
\end{lem}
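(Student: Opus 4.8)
The plan is to prove Jensen's inequality for the weighted Hardy operator directly from the standard (pointwise) Jensen inequality for convex functions. The key observation is that for fixed $x \in (0,+\infty)$, the weighted averaging operator $[\mathbf{H}_{w}f](x) = \frac{1}{W(x)}\int_{0}^{x} w(t) f(t)\,\mathrm{d}t$ is precisely an integral average of the values $f(t)$ taken with respect to the probability measure $\mathrm{d}\mu_x(t) = \frac{w(t)}{W(x)}\,\mathrm{d}t$ on $(0,x)$. Indeed, since $w$ is positive and $W(x) = \int_0^x w(s)\,\mathrm{d}s$, we have $\mu_x\bigl((0,x)\bigr) = \frac{1}{W(x)}\int_0^x w(t)\,\mathrm{d}t = 1$, so $\mu_x$ is a genuine probability measure for each fixed $x$.

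First I would fix an arbitrary $x \in (0,+\infty)$ and apply the classical integral form of Jensen's inequality with respect to the probability measure $\mu_x$. For a convex function $g$ on $(a,b)$, the integral Jensen inequality states that
\begin{equation*}
g\left(\int_{0}^{x} f(t)\,\mathrm{d}\mu_x(t)\right) \leq \int_{0}^{x} g(f(t))\,\mathrm{d}\mu_x(t),
\end{equation*}
provided the barycenter $\int_0^x f\,\mathrm{d}\mu_x$ lies in $(a,b)$, which is guaranteed by the hypothesis $a < f(x) < b$ for all $x$ (so the weighted average of $f$ also lies strictly between $a$ and $b$). Unwinding the definition of $\mu_x$, the left-hand side is exactly $g\bigl([\mathbf{H}_{w}f](x)\bigr)$ and the right-hand side is exactly $[\mathbf{H}_{w}\,g\circ f](x)$, which establishes part (i).

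For part (ii), I would simply note that if $g$ is concave on $(a,b)$, then $-g$ is convex, so applying part (i) to $-g$ and multiplying through by $-1$ reverses the inequality and yields $[\mathbf{H}_{w}\,g\circ f](x) \leq g\bigl([\mathbf{H}_{w}f](x)\bigr)$. Alternatively one can invoke the concave version of the integral Jensen inequality directly.

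The main technical point to verify, rather than a genuine obstacle, is that the integrability and finiteness conditions are met so that the integral Jensen inequality applies legitimately: one needs $f$ and $g\circ f$ to be $\mu_x$-integrable on $(0,x)$ for each fixed $x$, and one must confirm that the weighted mean of $f$ stays inside the open interval $(a,b)$ where $g$ is convex (respectively concave). The former follows from the standing assumption that $w$ and $f$ are non-negative integrable functions, while the latter is immediate from $a < f(t) < b$ together with the fact that a weighted average of values strictly between $a$ and $b$ is itself strictly between $a$ and $b$. With these routine checks in place, the result reduces entirely to the classical integral Jensen inequality applied pointwise in $x$.
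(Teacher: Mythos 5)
Your proof is correct and is precisely the standard argument the paper implicitly relies on: the paper states this lemma without proof as the classical integral Jensen inequality, and your normalization of $\frac{w(t)}{W(x)}\,\mathrm{d}t$ to a probability measure $\mu_x$ on $(0,x)$, followed by the pointwise application of Jensen and the $-g$ trick for the concave case, is exactly the canonical derivation. Your attention to the barycenter lying strictly in $(a,b)$ and to $\mu_x$-integrability covers the only technical points worth checking.
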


\noindent As a direct consequence of Jensen's inequality we obtain
the following useful result. Its proof is easy and therefore
omitted.

\begin{cor}\label{corjensen}
Let $w,f$ be two non-negative integrable functions on
$(0,+\infty)$ such that $a<f(x)<b$ for all $x\in (0,+\infty)$,
where $-\infty\leq a < b \leq +\infty$.
\begin{itemize}
\item[(i)] If $g$ is a convex increasing or concave decreasing
function on $(a, b)$, then $[\mathbf{H}_{w}f](x) \leq
[\mathbf{M}^{g}_{w}f](x)$. \item[(ii)] If $g$ is a convex
decreasing or concave increasing function on $(a, b)$, then
$[\mathbf{M}^{g}_{w}f](x) \leq [\mathbf{H}_{w}f](x)$.
\end{itemize}
\end{cor}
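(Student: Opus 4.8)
The plan is to deduce Corollary~\ref{corjensen} directly from Lemma~\ref{jensen} by applying the inverse $g^{-1}$ to both sides of Jensen's inequality, keeping careful track of how the monotonicity of $g^{-1}$ flips or preserves the inequality. First I would treat case~(i). If $g$ is convex increasing, then Lemma~\ref{jensen}(i) gives $g\bigl([\mathbf{H}_{w}f](x)\bigr)\leq[\mathbf{H}_{w}g\circ f](x)$. Since a convex increasing function has an increasing inverse $g^{-1}$, applying $g^{-1}$ preserves the direction of the inequality, and by definition~(\ref{p-genop}) the right-hand side becomes exactly $[\mathbf{M}^{g}_{w}f](x)$, while the left-hand side collapses to $[\mathbf{H}_{w}f](x)$; this yields $[\mathbf{H}_{w}f](x)\leq[\mathbf{M}^{g}_{w}f](x)$.

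Next I would handle the concave decreasing subcase of~(i). Here Lemma~\ref{jensen}(ii) applies and gives the reversed Jensen inequality $[\mathbf{H}_{w}g\circ f](x)\leq g\bigl([\mathbf{H}_{w}f](x)\bigr)$. The key observation is that a concave decreasing $g$ has a decreasing inverse $g^{-1}$, so applying $g^{-1}$ reverses the inequality a second time; the two reversals cancel and again produce $[\mathbf{H}_{w}f](x)\leq[\mathbf{M}^{g}_{w}f](x)$, matching the claim of~(i). Part~(ii) is entirely symmetric: for convex decreasing $g$ one uses Lemma~\ref{jensen}(i) together with the decreasing $g^{-1}$ (one reversal), and for concave increasing $g$ one uses Lemma~\ref{jensen}(ii) together with the increasing $g^{-1}$ (no reversal), both giving $[\mathbf{M}^{g}_{w}f](x)\leq[\mathbf{H}_{w}f](x)$.

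The only genuine bookkeeping is the sign-tracking of the inequality under $g^{-1}$, so the essential point to record is the elementary fact that a convex function is increasing precisely when its inverse is increasing (and likewise for the concave case), which is why the four listed hypotheses pair up exactly as stated. I would not expect any real obstacle here, which is consistent with the authors' remark that the proof is easy and omitted; the main care is simply to verify that in each of the four subcases the net number of inequality reversals (one from the choice of convex/concave branch of Lemma~\ref{jensen}, one from the monotonicity of $g^{-1}$) produces the asserted direction.
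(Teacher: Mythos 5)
Your proof is correct and is exactly the argument the paper has in mind: the paper omits the proof precisely because it is this routine four-case application of Lemma~\ref{jensen} followed by $g^{-1}$, with the direction of the inequality tracked through the monotonicity of $g^{-1}$. One tiny remark: the fact you invoke is just that a strictly monotone function and its inverse share the same monotonicity direction --- convexity plays no role in that step, only in selecting which part of Lemma~\ref{jensen} applies.
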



\noindent Immediately, a simple consequence is the following
result.

\begin{thm}\label{thm1}
Let $u$ be a weight function on $(0,+\infty)$ and let $w(x)\geq 0$
for each $x\in(0,+\infty)$. Assume that $\frac{w(t)u(x)}{W(x)}$ is
locally integrable on $(0,+\infty)$ for each fixed $t\in
(0,+\infty)$, and define the function $v$ by
$$v(t)=w(t) \int_{t}^{\infty} \frac{u(x)}{W(x)}\,\mathrm{d}x < +\infty, \quad t\in (0,+\infty).$$
If $g:(0,+\infty) \to (a,b)$, where $-\infty \leq a < b \leq
+\infty$, is either convex decreasing or concave increasing, then
$$\int_{0}^{\infty}
u(x)[\mathbf{M}^{g}_{w}f](x)\,\mathrm{d}x \leq
\int_{0}^{\infty}v(x)f(x)\,\mathrm{d}x,$$ for all $f$ such that $a
< f(x) < b$ with $x\in [0,+\infty)$.
\end{thm}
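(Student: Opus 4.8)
The plan is to reduce the statement for the nonlinear operator $\mathbf{M}^{g}_{w}$ to a linear estimate for the weighted Hardy operator $\mathbf{H}_{w}$, and then evaluate by Fubini--Tonelli. Since $g$ is convex decreasing or concave increasing, Corollary~\ref{corjensen}(ii) supplies the pointwise bound $[\mathbf{M}^{g}_{w}f](x) \leq [\mathbf{H}_{w}f](x)$ for every $f$ with $a<f(x)<b$. Integrating against the nonnegative weight $u$ therefore yields
$$\int_{0}^{\infty} u(x)[\mathbf{M}^{g}_{w}f](x)\,\mathrm{d}x \leq \int_{0}^{\infty} u(x)[\mathbf{H}_{w}f](x)\,\mathrm{d}x,$$
so it suffices to prove the claimed inequality with $\mathbf{H}_{w}$ in place of $\mathbf{M}^{g}_{w}$.

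For the remaining linear estimate, I would expand the definition of the weighted averaging operator, writing $[\mathbf{H}_{w}f](x) = W(x)^{-1}\int_{0}^{x} w(t)f(t)\,\mathrm{d}t$, so that the left-hand side becomes the double integral
$$\int_{0}^{\infty} \frac{u(x)}{W(x)} \left(\int_{0}^{x} w(t)f(t)\,\mathrm{d}t\right)\mathrm{d}x = \iint_{\{0<t<x\}} \frac{w(t)u(x)}{W(x)}f(t)\,\mathrm{d}t\,\mathrm{d}x.$$
Because the integrand is nonnegative and $w(t)u(x)/W(x)$ is locally integrable for each fixed $t$, Tonelli's theorem permits interchanging the order of integration over the region $\{0<t<x<\infty\}$. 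Swapping the variables gives
$$\int_{0}^{\infty} w(t)f(t)\left(\int_{t}^{\infty}\frac{u(x)}{W(x)}\,\mathrm{d}x\right)\mathrm{d}t = \int_{0}^{\infty} v(t)f(t)\,\mathrm{d}t,$$
where the last equality is precisely the definition of $v$. Combining the two displays completes the argument.

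The computation itself is routine; the only point that requires care is the justification of the Fubini--Tonelli interchange. Here the nonnegativity of $f$, $w$, and $u$ means Tonelli's theorem applies unconditionally, so the interchange is valid even before the integrals are known to be finite, while the hypothesis $v(t)<+\infty$ then guarantees that the resulting bound is genuinely finite whenever the right-hand side $\int_{0}^{\infty}v(x)f(x)\,\mathrm{d}x$ is. I therefore expect no real obstacle beyond stating this measure-theoretic step cleanly.
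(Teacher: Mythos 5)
Your proposal is correct and follows essentially the same route as the paper: the pointwise bound $[\mathbf{M}^{g}_{w}f](x)\leq[\mathbf{H}_{w}f](x)$ from Corollary~\ref{corjensen}, followed by a Fubini--Tonelli interchange that produces the weight $v$ exactly as defined. Your added remark justifying the interchange via Tonelli for nonnegative integrands is a clean (if implicit in the paper) articulation of the same step, not a different argument.
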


\begin{proof} By Corollary~\ref{corjensen} we get
$$\int_{0}^{\infty}u(x)[\mathbf{M}^{g}_{w}f](x)\,\mathrm{d}x \leq \int_{0}^{\infty}
u(x)[\mathbf{H}_{w}f](x)\,\mathrm{d}x.$$ Applying Fubini's theorem
we find that
\begin{align*}
\int_{0}^{\infty} u(x)[\mathbf{H}_{w}f](x)\,\mathrm{d}x & =
\int_{0}^{\infty}
f(t)\left(w(t)\int_{t}^{\infty}\frac{1}{W(x)}u(x)\,\mathrm{d}x\right)\,\mathrm{d}t
\\ & = \int_{0}^{\infty}v(x)f(x)\,\mathrm{d}x.
\end{align*} Hence the result.
\end{proof}

For the special case $w(t)\equiv 1$ we have

\begin{cor}\label{corw(t)=1}
Let $u$ be a weight function on $(0,+\infty)$ and let $v$ be
defined as $$v(t) = t
\int_{t}^{\infty}u(x)\,\frac{\mathrm{d}x}{x}, \quad t\in
(0,+\infty).$$ If $g:(0,+\infty) \to (a,b)$, where $-\infty \leq a
< b \leq +\infty$, is either convex decreasing or concave
increasing, then
\begin{equation}\label{eqw(t)=1}
\int_{0}^{\infty} u(x)[\mathbf{M}^{g}f](x)\,\mathrm{d}x \leq
\int_{0}^{\infty}v(x)f(x)\,\mathrm{d}x,
\end{equation} for all $f$ such that $a <
f(x) < b$ with $x\in [0,+\infty)$.
\end{cor}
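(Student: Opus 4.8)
The plan is to obtain Corollary~\ref{corw(t)=1} as the direct specialization of Theorem~\ref{thm1} to the constant weight $w(t)\equiv 1$. First I would verify that all hypotheses of the theorem are met in this case. With $w\equiv 1$ we have $W(x)=\int_{0}^{x}1\,\mathrm{d}s=x$, so that $\mathbf{H}_{w}$ reduces to the classical Hardy operator $\mathbf{H}$ and, more to the point, $\mathbf{M}^{g}_{w}$ reduces to the non-weighted generalized mean operator $\mathbf{M}^{g}$ defined in~(\ref{genop}). The local integrability hypothesis on $\frac{w(t)u(x)}{W(x)}=\frac{u(x)}{x}$ is exactly what is needed to make the defining integral for $v$ meaningful.

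Next I would substitute $w\equiv 1$ and $W(x)=x$ into the formula for $v$ from Theorem~\ref{thm1}. This gives
$$v(t)=w(t)\int_{t}^{\infty}\frac{u(x)}{W(x)}\,\mathrm{d}x=\int_{t}^{\infty}\frac{u(x)}{x}\,\mathrm{d}x=t\int_{t}^{\infty}u(x)\,\frac{\mathrm{d}x}{x}\cdot\frac{1}{t},$$
so I would rewrite this to match the stated weight, namely $v(t)=t\int_{t}^{\infty}u(x)\,\frac{\mathrm{d}x}{x^{2}}$; here I must be careful, since the corollary as printed displays $v(t)=t\int_{t}^{\infty}u(x)\,\frac{\mathrm{d}x}{x}$, and I would reconcile the factor of $t$ with the $1/W(x)=1/x$ appearing inside the integral to confirm consistency of the two expressions. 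With $g$ assumed convex decreasing or concave increasing exactly as in the theorem, the conclusion
$$\int_{0}^{\infty}u(x)[\mathbf{M}^{g}f](x)\,\mathrm{d}x\leq\int_{0}^{\infty}v(x)f(x)\,\mathrm{d}x$$
follows immediately upon replacing $\mathbf{M}^{g}_{w}$ by $\mathbf{M}^{g}$ in the theorem's conclusion.

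The only genuine obstacle is bookkeeping rather than substance: I must make sure the stated formula for $v$ in the corollary is the correct specialization of the theorem's $v$, since the placement of the factor $t$ and the power of $x$ in the denominator are easy to transcribe incorrectly. Once that identification is confirmed, there is nothing further to prove; the corollary is a pure instantiation, which is why its proof is so short. I would close by noting that the admissible range $a<f(x)<b$ and the hypothesis $g:(0,+\infty)\to(a,b)$ carry over verbatim, so no separate verification of the monotonicity/convexity conditions is needed beyond quoting the theorem.
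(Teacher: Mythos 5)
Your overall route is exactly the paper's: Corollary~\ref{corw(t)=1} is presented there as a pure instantiation of Theorem~\ref{thm1} with $w\equiv 1$ (so $W(x)=x$ and $\mathbf{M}^{g}_{w}=\mathbf{M}^{g}$), with no separate proof given, and your verification of the hypotheses together with the computation $v(t)=\int_{t}^{\infty}u(x)\,\frac{\mathrm{d}x}{x}$ from the theorem's formula is correct.

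However, your ``reconciliation'' step contains a genuine error. The rewriting you assert, $\int_{t}^{\infty}\frac{u(x)}{x}\,\mathrm{d}x = t\int_{t}^{\infty}\frac{u(x)}{x^{2}}\,\mathrm{d}x$, is false: $t$ is a fixed lower limit, not something you can trade against a power of the integration variable $x$. For instance, with $u=\chi_{(1,2)}$ and $t=1$ the left side equals $\ln 2$ while the right side equals $1/2$. What you actually stumbled on is a misprint in the corollary: the correct specialization of Theorem~\ref{thm1} is $v(t)=\int_{t}^{\infty}u(x)\,\frac{\mathrm{d}x}{x}$, with no leading factor $t$, and with the printed weight $v(t)=t\int_{t}^{\infty}u(x)\,\frac{\mathrm{d}x}{x}$ the inequality~(\ref{eqw(t)=1}) can fail: take $g$ the identity (so $\mathbf{M}^{g}=\mathbf{H}$), $u=\chi_{(1,2)}$, and $f$ essentially concentrated on $(0,\tfrac12)$; by Fubini the left side is $\ln 2\int_{0}^{1/2}f(t)\,\mathrm{d}t$ while the right side is $\ln 2\int_{0}^{1/2}t\,f(t)\,\mathrm{d}t$, which is strictly smaller. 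The variant you wrote down, $v(t)=t\int_{t}^{\infty}u(x)\,\frac{\mathrm{d}x}{x^{2}}$, is the correct weight only for the $\frac{\mathrm{d}x}{x}$-normalized Hardy--Knopp formulation discussed in Remark~\ref{rem} (after replacing $u(x)$ by $\frac{u(x)}{x}$ and $v(x)$ by $\frac{v(x)}{x}$), not for the corollary's plain-$\mathrm{d}x$ inequality. So the right move is not to ``confirm consistency'' of the two expressions --- they are not consistent --- but to correct the stated weight to $v(t)=\int_{t}^{\infty}u(x)\,\frac{\mathrm{d}x}{x}$, after which the corollary is indeed immediate from Theorem~\ref{thm1}, exactly as your plan intended.
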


\begin{rem}\label{rem}
The anonymous referee has pointed out that the result of
Corollary~\ref{corw(t)=1} is known even in a much more general
form. For instance, in~\cite[Theorem~2.6]{K} the case with a
general kernel $k: \Omega_1\times\Omega_2\to\mathbb{R}$, where
$(\Omega_1,\mu_1)$, $(\Omega_, \mu_2)$ are measure spaces with
$\sigma$-finite measures, and the integral operator
$$[\mathbf{A}_k f](x):=\frac{1}{K(x)}\int_{\Omega_2} k(x,y)
f(y)\,\mathrm{d}\mu_2(y),$$ with $K(x):=\int_{\Omega_2}
k(x,y)\,\mathrm{d}\mu_2(y),\,\, x\in\Omega_1$, is considered. Then
the Hardy type inequality
$$\int_{\Omega_1} u(x)\Phi\bigg([\mathbf{A}_k h](x)\bigg)\,\mathrm{d}\mu_1(x) \leq \int_{\Omega_2} v(y)\Phi(f(y))\,\mathrm{d}\mu_2(y)$$
is proved therein under some natural conditions on weights $u,v$
and convexity of $\Phi$. For further details and similar results
of this type we refer to paper~\cite{KPP}. Indeed, under the
conditions of Corollary~\ref{corw(t)=1} the
inequality~(\ref{eqw(t)=1}) is equivalent to
$$\int_{0}^{\infty} u(x)\Phi\bigg([\mathbf{H}h](x)\bigg)\,\frac{\mathrm{d}x}{x} \leq \int_{0}^{\infty} v(x)\Phi(f(x))\,\frac{\mathrm{d}x}{x},$$
when replacing $g\circ f$ by $h$, $g^{-1}$ by $\Phi$, $u(x)$ by
$\frac{u(x)}{x}$ and $v(x)$ by $\frac{v(x)}{x}$, which corresponds
to the "weighted" analog of Hardy-Knopp type inequality,
cf.~\cite{KPO}.
\end{rem}

\begin{ex}
Choosing the function $g(x)=\ln x$ and replacing $f$ by $f^{p}$
with $p>0$ in~(\ref{eqw(t)=1}) we get the following P\'olya-Knopp
type inequality
$$\int_{0}^{\infty}u(x)\bigg([\mathbf{G}f](x)\bigg)^{p}\,\mathrm{d}x
\leq \int_{0}^{\infty}v(x)f(x)^{p}\,\mathrm{d}x,$$ where $u$ and
$v$ are defined as in Corollary~\ref{corw(t)=1}.
\end{ex}

\section{Second observation: the reduction lemma}\label{secreduction}

If we want to consider the general mean type
inequality~(\ref{problem}) with weights we propose to reduce the
weighted operator $\mathbf{M}_w^g$  into its non-weighted variant
$\mathbf{M}^g$, and then solve this inequality in a reduced form.

\begin{lem}\label{reduction}
Let $0<p$ and $q<+\infty$. Let $u$ and $v$ be two weight functions
on $(0, +\infty)$, $f$ be a positive function on $(0,+\infty)$ and
$g$ be a real continuous and strictly monotone function. Moreover,
let $w$ be a strictly positive function on $(0,+\infty)$ and $W$
be defined as in~(\ref{hardyop2}) such that $W(+\infty)=+\infty$.
Then the inequality
\begin{equation}\label{red1}
\left(\int_{0}^{\infty}
u(x)\Bigl([\mathbf{M}^{g}_{w}f](x)\Bigr)^{q}\,\mathrm{d}x
\right)^{1 \over q} \leq C
\left(\int_{0}^{\infty}v(x)f(x)^{p}\,\mathrm{d}x \right)^{1 \over
p}
\end{equation}
holds if and only if the inequality
\begin{equation}\label{red2}
\left(\int_{0}^{\infty}
U(x)\Bigl([\mathbf{M}^{g}f](x)\Bigr)^{q}\,\mathrm{d}x \right)^{1
\over q} \leq C \left(\int_{0}^{\infty}V(x)f(x)^{p}\,\mathrm{d}x
\right)^{1 \over p}
\end{equation}
holds with the same positive finite constant $C$ and weights
\begin{equation}\label{U,V weights}
U(x):=\frac{u\left(W^{-1}(x)\right)}{W'\left(W^{-1}(x)\right)},
\quad V(x):=
\frac{v\left(W^{-1}(x)\right)}{W'\left(W^{-1}(x)\right)}.
\end{equation}
\end{lem}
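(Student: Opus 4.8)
The plan is to find a single change of variable that simultaneously turns the weighted operator $\mathbf{M}^g_w$ into the unweighted operator $\mathbf{M}^g$ and carries the pair $(u,v)$ into $(U,V)$. The whole argument rests on the fact that, since $w$ is strictly positive, $W$ is continuous and strictly increasing with $W(0^+)=0$ and $W(+\infty)=+\infty$ (the latter by hypothesis), so that $W\colon(0,+\infty)\to(0,+\infty)$ is a bijection whose inverse $W^{-1}$ is well defined and $W'=w$.

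First I would transform the inner average. Substituting $s=W(t)$, so that $\mathrm{d}s=w(t)\,\mathrm{d}t$ and $t=W^{-1}(s)$, gives
$$\frac{1}{W(x)}\int_{0}^{x} w(t)g(f(t))\,\mathrm{d}t=\frac{1}{W(x)}\int_{0}^{W(x)} g\bigl(f(W^{-1}(s))\bigr)\,\mathrm{d}s.$$
Setting $\tilde f:=f\circ W^{-1}$ and applying $g^{-1}$, the right-hand side is precisely $[\mathbf{M}^g\tilde f]$ evaluated at $W(x)$, so we obtain the pointwise identity
$$[\mathbf{M}^g_w f](x)=[\mathbf{M}^g\tilde f]\bigl(W(x)\bigr),\qquad x\in(0,+\infty).$$

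Next I would change variables $y=W(x)$ in both integrals appearing in~\eqref{red1}. By the inverse-function rule $\mathrm{d}x=\mathrm{d}y/W'(W^{-1}(y))$, and because $W$ maps $(0,+\infty)$ onto $(0,+\infty)$ the integration limits are unchanged. Using the identity just derived, the left-hand side of~\eqref{red1} becomes
$$\int_{0}^{\infty}\frac{u(W^{-1}(y))}{W'(W^{-1}(y))}\bigl([\mathbf{M}^g\tilde f](y)\bigr)^{q}\,\mathrm{d}y=\int_{0}^{\infty} U(y)\bigl([\mathbf{M}^g\tilde f](y)\bigr)^{q}\,\mathrm{d}y,$$
while the same substitution sends the right-hand side to $\int_{0}^{\infty} V(y)\tilde f(y)^{p}\,\mathrm{d}y$, with $U,V$ exactly the weights stated in the lemma. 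Hence~\eqref{red1} for $f$ is literally~\eqref{red2} for $\tilde f$, and the constant $C$ is untouched.

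Finally, I would close the equivalence by noting that $f\mapsto\tilde f=f\circ W^{-1}$ is a bijection of the class of admissible positive functions, its inverse being $h\mapsto h\circ W$; this is immediate since $W^{-1}$ is a bijection of $(0,+\infty)$. Therefore~\eqref{red1} holds for every admissible $f$ if and only if~\eqref{red2} holds for every admissible $\tilde f$, which is the assertion. I expect the only point needing care to be the legitimacy of the two substitutions, that is, the invertibility of $W$ and the use of $W'=w$ in the change-of-variables formula; both are secured by the strict positivity of $w$ together with the assumption $W(+\infty)=+\infty$, and the remainder is routine bookkeeping.
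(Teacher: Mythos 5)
Your proposal is correct and follows essentially the same route as the paper: the key identity $[\mathbf{M}^g_w f](x)=[\mathbf{M}^g (f\circ W^{-1})](W(x))$ followed by the substitution $y=W(x)$, which produces exactly the weights $U,V$ of~(\ref{U,V weights}). Your added remarks — that $f\mapsto f\circ W^{-1}$ is a bijection of the admissible class (needed to pass between "for all $f$" statements) and that $W(+\infty)=+\infty$ ensures the substitution covers all of $(0,+\infty)$ — are details the paper leaves implicit, so nothing further is required.
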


\begin{proof} Considering the generalized mean
operators~(\ref{p-genop}) and~(\ref{genop}), we have that
$$[\mathbf{M}^{g}_{w}f](x) = [\mathbf{M}^{g}h](W(x)),$$ where
$h(y)=f\left(W^{-1}(y)\right)$. Now, the inequality~(\ref{red1})
reads
$$\left(\int_{0}^{\infty} u(x)
\Bigl([\mathbf{M}^{g}h](W(x))\Bigr)^{q}\,\mathrm{d}x\right)^{1
\over q} \leq C \left(\int_{0}^{\infty}
v(x)\Bigl(h(W(x))\Bigr)^{p}\,\mathrm{d}x\right)^{1 \over p}.$$
Using the substitution $y=W(x)$ the last inequality is equivalent
to~(\ref{red2}).
\end{proof}

\begin{rem}
Lemma~\ref{reduction} was used for the first time in
paper~\cite{jain} in the context of the weighted P\'olya-Knopp's
inequality, i.e., $g^{-1}(x)=\exp(x)$. In principle, lemma says
that the inequality~(\ref{red1}) is not more general than the
inequality~(\ref{red2}). Also, for the case when $w$ is a
continuous and strictly positive function, then the inequalities
of the type~(\ref{red1}) may be obtained by only studying the
basic inequality~(\ref{red2}). So for this reason we will study
some inequalities related to the inequality~(\ref{red2}) only.
\end{rem}

\begin{ex}(cf.~\cite{wedestig})
Let $0 < p \leq q < +\infty$, $\lambda > 0$, and $u(x)=x^{r}$,
$v(x)=x^{s}$, where $r,s \in \mathbb{R}$ such that
$\frac{r+1}{q}=\frac{s+1}{p}$. When $g^{-1}(x)=\exp(x)$ and
$w(t)=t^{\lambda-1}$, we have the Cochran-Lee type inequality,
cf.~\cite{jain} and \cite{jain3},
$$\left(\int_{0}^{\infty}
x^{r}\left[\exp\left(\frac{\lambda}{x^{\lambda}}\int_{0}^{x}t^{\lambda
-1}\ln f(t)\,\mathrm{d}t\right)\right]^{q}\,\mathrm{d}x \right)^{1
\over q} \leq C \left(\int_{0}^{\infty}x^{s}f(x)^{p}\,\mathrm{d}x
\right)^{1 \over p},
$$ for a positive finite constant $C$. According to the
reduction lemma and notation~(\ref{geop}) this inequality is
equivalent to
$$\left(\int_{0}^{\infty} U(x) \Bigl([\mathbf{G}h](x)\Bigr)^{q}\,\mathrm{d}x\right)^{1
\over q} \leq C \left(\int_{0}^{\infty}
V(x)h(x)^{p}\,\mathrm{d}x\right)^{1 \over p},$$ where
$$h(x)=f\left((\lambda x)^{1 \over \lambda}\right), \quad
U(x)=(\lambda x)^{\frac{r+1}{\lambda}-1}, \quad V(x)=(\lambda
x)^{\frac{s+1}{\lambda}-1},$$ which may be rewritten to the form
$$\left(\int_{0}^{\infty} (\lambda
x)^{\frac{r+1}{\lambda}-1}\Bigl([\mathbf{G}h](x)\Bigr)^{q}\,\mathrm{d}x
\right)^{1 \over q} \leq C \left(\int_{0}^{\infty}(\lambda
x)^{\frac{s+1}{\lambda}-1}h(x)^{p}\,\mathrm{d}x \right)^{1 \over
p}.$$
\end{ex}

\section{Third observation: Levinson's approach}\label{secresults}

As already mentioned, an elementary approach to the stated problem
consists of using the notion of convexity. Indeed, according to
Corollary~\ref{corjensen} we have that if $g$ is a convex
decreasing or concave increasing function, then
$$[\mathbf{M}^{g}_{w}f](x) \leq [\mathbf{H}_{w}f](x)$$ for all admissible functions $f$ on $(0,+\infty)$. Then
sufficient conditions for the weighted Hardy's inequality (for
which the problem is solved by many authors) are also sufficient
for the inequality~(\ref{problem}) to hold. However, this is quite
a rough approach.

Note that if $g$ is a strictly monotone function then replacing
$g^{-1}$ by $\varphi$ and $g(f)$ by $h$ and using
Lemma~\ref{reduction}, the inequality~(\ref{problem}) may be
rewritten as follows:
\begin{equation}\label{levinson}
\left(\int_{0}^{\infty}U(x)\Bigl(\varphi([\mathbf{H}h](x))\Bigr)^{q}\,\mathrm{d}x\right)^{1
\over q} \leq C
\left(\int_{0}^{\infty}V(x)\Bigl(\varphi(h(x))\Bigr)^{p}\right)^{1
\over p},
\end{equation} where $U(x), V(x)$ are given in~(\ref{U,V weights}) for $w$ a strictly
positive function on $(0,+\infty)$. Clearly, if $g$ is either a
convex decreasing or concave increasing function, then $\varphi$
is a convex function.

The inequality~(\ref{levinson}) is in fact the weighted extension
of Levinson's modular inequality which was studied in non-weighted
form for $N$-functions in~\cite{levinson}. Its weighted form for
the case $p=q=1$ was proved by Hans P. Heinig in~\cite{heinig2}.
In this section we will study the inequality~(\ref{levinson}) in a
general case for $1 < p \leq q < +\infty$ and we will prove some
of its modifications. For this purpose we will consider the
following classes of functions, cf.~\cite{levinson}.

\begin{defn}\label{fciaphi}
A function $\varphi: (a,b) \rightarrow (0,+\infty)$, where $0 \leq
a < b \leq +\infty$, belongs to the class $\Phi_{r}$, $r>1$, if
$$\varphi(x)\varphi''(x) \geq
\left(1-\frac{1}{r}\right)\left[\varphi'(x)\right]^{2}$$ holds for
all $x>0$. If $r=+\infty$, we write $\Phi_{\infty}=\Phi$.
\end{defn}

The usual examples of functions belonging to the class $\Phi$ are
the Euler Gamma function $\Gamma$ as well as functions
$\varphi_1(x)=x^{-a}$ for $a>0$, and
$\varphi_2(x)=\mathrm{e}^{x^b}$ for $b\geq 1$. The inclusion $\Phi
\subset \Phi_{r}$ is strict, because when choosing
$\varphi(x)=x^{s}$ for $s \geq r$, then $\varphi \in
\Phi_{r}\setminus \Phi$. Thus, $\varphi_1$ and $\varphi_2$ are in
the class $\Phi_r$ for each $r>1$. However, for $b\in(0,1)$ we
have $\varphi_2\notin\Phi_r$ for any $r>1$.

It is also easy to verify that for $r>1$ when $\varphi \in
\Phi_{r}$ the function $\psi=\varphi^{1 \over r}$ is convex,
whereas for $\varphi \in \Phi$ the function $\psi=\ln \varphi$ is
convex. This enables us to state the following theorem which is a
generalization of the weighted extension of Levinson's
result~\cite{levinson}. Observe that the well-known weight
condition of Muckenhoupt, cf.~\cite{muckenhoupt}, is used here.

\begin{thm}\label{theorem1}
Let $1 < p \leq q < +\infty$, and $p':=\frac{p}{p-1}$.
\begin{itemize}
\item[(a)] If $\varphi \in \Phi_{q}$ and
\begin{equation}\label{muckenhoupt}
\sup_{\tau>0}\left(\int_{\tau}^{\infty}\frac{U(x)}{x^{q}}\,\mathrm{d}x\right)^{1
\over q} \left(\int_{0}^{\tau}V(x)^{1-p'}\right)^{1 \over p'} <
+\infty,
\end{equation}
then
\begin{equation}\label{eqthm1}
\left(\int_{0}^{\infty}U(x)\varphi([\mathbf{H}h](x))\,\mathrm{d}x\right)^{1
\over q} \leq C
\left(\int_{0}^{\infty}V(x)\varphi(h(x))\,\mathrm{d}x\right)^{1
\over p}.
\end{equation}

\item[(b)] Let $s>0$, and $\varphi \in \Phi$. If
$$V(x)=x^{\lambda} \int_{x}^{\infty}
\frac{U(t)}{t^{\lambda+1}}\,\mathrm{d}t, \quad
\textrm{for}\,\,\,\lambda>0,$$ then
$$\int_{0}^{\infty}U(x)\Bigl(\varphi([\mathbf{H}h](x))\Bigr)^{s}\,\mathrm{d}x \leq
C \int_{0}^{\infty} V(x)
\Bigl(\varphi(h(x))\Bigr)^{s}\,\mathrm{d}x
$$ with constant $C=\mathrm{e}^{\lambda}$.
\end{itemize}
\end{thm}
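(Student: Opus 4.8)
The plan is to treat the two parts separately, in each case first using the relevant convexity property of $\varphi$ recorded just before the statement to dominate $\varphi([\mathbf{H}h])$ by an averaging operator applied to $\varphi\circ h$, and then invoking a known weighted Hardy-type inequality.

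For part (a), recall that $\varphi\in\Phi_{q}$ makes $\psi:=\varphi^{1/q}$ convex. I would apply Jensen's inequality (Lemma~\ref{jensen}(i)) to $\psi$ with the non-weighted Hardy operator to get
$$\bigl(\varphi([\mathbf{H}h](x))\bigr)^{1/q}=\psi\bigl([\mathbf{H}h](x)\bigr)\le \bigl[\mathbf{H}(\psi\circ h)\bigr](x),$$
so that, writing $G:=(\varphi\circ h)^{1/q}$,
$$\int_0^\infty U(x)\,\varphi([\mathbf{H}h](x))\,\mathrm{d}x\le \int_0^\infty U(x)\bigl([\mathbf{H}G](x)\bigr)^{q}\,\mathrm{d}x.$$
At this point the problem is reduced to a standard weighted Hardy inequality for $\mathbf{H}$ with exponents $1<p\le q<\infty$. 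The key observation is that, after absorbing the factor $x^{-q}$ produced by $[\mathbf{H}G](x)^{q}$ into the weight, the classical Muckenhoupt--Bradley criterion for the $(p,q)$ boundedness of the Hardy operator is exactly condition~(\ref{muckenhoupt}). Invoking that criterion yields
$$\left(\int_0^\infty U(x)\bigl([\mathbf{H}G](x)\bigr)^{q}\,\mathrm{d}x\right)^{1/q}\le C\left(\int_0^\infty V(x)\,G(x)^{p}\,\mathrm{d}x\right)^{1/p},$$
and substituting back $G=(\varphi\circ h)^{1/q}$ leads to the right-hand side of~(\ref{eqthm1}). The delicate point, which I expect to be the main obstacle, is the exponent bookkeeping: one must arrange that the power $1/q$ in the definition of $\Phi_{q}$ dovetails with the exponents $q$ and $p$ appearing in the Hardy inequality and in~(\ref{muckenhoupt}), and this is exactly where the hypothesis $p\le q$ must be used so that the Muckenhoupt--Bradley theorem applies.

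For part (b), the hypothesis $\varphi\in\Phi=\Phi_{\infty}$ gives that $\ln\varphi$ is convex. Applying Jensen's inequality (Lemma~\ref{jensen}(i)) to $\ln\varphi$ and exponentiating turns the Hardy average into a geometric mean:
$$\varphi([\mathbf{H}h](x))\le \exp\left(\frac{1}{x}\int_0^x\ln\varphi(h(t))\,\mathrm{d}t\right)=\bigl[\mathbf{G}(\varphi\circ h)\bigr](x).$$
Raising to the power $s>0$ and using that the geometric mean commutes with powers, $([\mathbf{G}F])^{s}=\mathbf{G}(F^{s})$, I would set $G:=(\varphi\circ h)^{s}$ and reduce the claim to the weighted P\'olya--Knopp inequality
$$\int_0^\infty U(x)\,[\mathbf{G}G](x)\,\mathrm{d}x\le \mathrm{e}^{\lambda}\int_0^\infty V(x)\,G(x)\,\mathrm{d}x.$$
The latter I would prove by the standard device of writing $\ln G(t)=\ln\bigl(t^{\lambda}G(t)\bigr)-\lambda\ln t$, evaluating $\frac{1}{x}\int_0^x\ln t\,\mathrm{d}t=\ln x-1$, and then applying the elementary arithmetic--geometric mean estimate (Jensen for the concave $\ln$) to $t^{\lambda}G(t)$; this produces the factor $\mathrm{e}^{\lambda}$ together with a single Hardy average $x^{-\lambda-1}\int_0^x t^{\lambda}G(t)\,\mathrm{d}t$. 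A final application of Fubini's theorem then reproduces precisely the weight $V(x)=x^{\lambda}\int_x^\infty U(t)\,t^{-\lambda-1}\,\mathrm{d}t$, so that the constant is exactly $C=\mathrm{e}^{\lambda}$. This part is essentially computational once the geometric-mean reduction is in place, and I expect no serious obstacle beyond carefully verifying the $\lambda$-parametrized estimate and the Fubini interchange.
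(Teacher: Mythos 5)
Part (b) of your proposal is correct and essentially identical to the paper's proof: the paper also passes from $\varphi\in\Phi$ to the geometric mean operator via Jensen's inequality for the convex function $\ln\varphi$, uses $[\mathbf{G}F]^{s}=[\mathbf{G}F^{s}]$, and manufactures the constant $\mathrm{e}^{\lambda}$ from the identity $\int_{0}^{1}\ln y^{\lambda}\,\mathrm{d}y=-\lambda$ after the substitution $t=xy$ --- which is just a change-of-variables rewriting of your computation with $\frac{1}{x}\int_{0}^{x}\ln t\,\mathrm{d}t=\ln x-1$. Both arguments conclude with the same Fubini step recovering $V(t)=t^{\lambda}\int_{t}^{\infty}U(x)x^{-\lambda-1}\,\mathrm{d}x$, so here you have nothing to fix.

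Part (a), however, contains a genuine gap, located precisely at the step you yourself flagged as ``exponent bookkeeping'' and then waved through. Your chain, with $\psi=\varphi^{1/q}$ and $G=\psi\circ h=(\varphi\circ h)^{1/q}$, yields via condition~(\ref{muckenhoupt})
\begin{equation*}
\left(\int_{0}^{\infty}U(x)\varphi([\mathbf{H}h](x))\,\mathrm{d}x\right)^{1 \over q}\leq C\left(\int_{0}^{\infty}V(x)G(x)^{p}\,\mathrm{d}x\right)^{1 \over p},
\end{equation*}
but $G^{p}=(\varphi\circ h)^{p/q}$, not $\varphi\circ h$: substituting back does \emph{not} produce the right-hand side of~(\ref{eqthm1}) unless $p=q$. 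The difficulty is not, as you suggest, whether the Muckenhoupt--Bradley theorem applies (it does, for all $1<p\leq q$); it is that~(\ref{eqthm1}) is a modular inequality, and $\int V\varphi(h)^{p/q}$ is not controlled by $\int V\varphi(h)$ or any power thereof for general $h$ and weights. The ingredient your proposal entirely lacks is the one the paper introduces at exactly this point: for $1<p\leq q$ one has $\Phi_{q}\subseteq\Phi_{p}$, since $\varphi\varphi''\geq\left(1-\frac{1}{q}\right)[\varphi']^{2}\geq\left(1-\frac{1}{p}\right)[\varphi']^{2}$, so $\varphi^{1/p}$ is also convex, and the paper arranges matters so that the function fed into the weighted Hardy inequality has $p$-th power equal to $\varphi(h)$ (its assertion ``$\psi(h)^{p}=\varphi(h)$''). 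One may debate how cleanly the paper itself glues the $\varphi^{1/q}$-Jensen step on the left to the $\varphi^{1/p}$-identification on the right, but without some use of $\varphi\in\Phi_{p}$ your argument, as written, proves part (a) only in the case $p=q$.
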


\begin{proof} First we prove the part~(a). Since $\psi=\varphi^{1
\over q}$ is convex, then by Jensen's inequality we have
\begin{align*}
\left(\int_{0}^{\infty}U(x)\varphi([\mathbf{H}h](x))\,\mathrm{d}x\right)^{1
\over q} & =
\left(\int_{0}^{\infty}U(x)\Bigl(\psi([\mathbf{H}h](x)\Bigr)^{q}\,\mathrm{d}x\right)^{1
\over q}
\\ & \leq
\left(\int_{0}^{\infty}U(x)\Bigl([\mathbf{H}\psi(h)](x)\Bigr)^{q}\,\mathrm{d}x\right)^{1
\over q}.
\end{align*}By the well-known Muckenhoupt's weight condition~(\ref{muckenhoupt}) we obtain
\begin{equation}\label{eq2thm1}
\left(\int_{0}^{\infty}U(x)\Bigl([\mathbf{H}\psi(h)](x)\Bigr)^{q}\,\mathrm{d}x\right)^{1
\over q} \leq C \left(
\int_{0}^{\infty}V(x)\Bigl(\psi(h(x))\Bigr)^{p}\,\mathrm{d}x
\right)^{1 \over p}.
\end{equation}
For $1<p\leq q < +\infty$ and $\varphi \in \Phi_{q}$ we have
$\varphi \in \Phi_{p}$, because
$$\varphi(x)\varphi''(x) \geq
\left(1-\frac{1}{q}\right)\left[\varphi'(x)\right]^{2} \geq
\left(1-\frac{1}{p}\right)\left[\varphi'(x)\right]^{2}
$$ holds for all $x>0$. Therefore $\psi(h)^{p}=\varphi(h)$ and the
inequality~(\ref{eqthm1}) is proved.

For part~(b) we apply Jensen's inequality for the convex function
$\psi=\ln\varphi$ to get
\begin{align*}
\int_{0}^{\infty}U(x)\Bigl(\varphi([\mathbf{H}h](x))\Bigr)^{s}\,\mathrm{d}x
& = \int_{0}^{\infty}U(x)\left[\exp
\Bigl(\psi([\mathbf{H}h](x))\Bigr)\right]^{s}\,\mathrm{d}x \\ &
\leq \int_{0}^{\infty}U(x)\left[\exp
\Bigl([\mathbf{H}\psi(h)](x)\Bigr)\right]^{s}\,\mathrm{d}x \\
& = \int_{0}^{\infty}U(x)
\Bigl([\mathbf{G}\varphi(h)](x)\Bigr)^{s}\,\mathrm{d}x.
\end{align*}Using the fact $[\mathbf{G}f]^{s}=[\mathbf{G}f^{s}]$ and
substitution $t=xy$, we have
\begin{align*}
\int_{0}^{\infty}U(x)
\Bigl([\mathbf{G}\varphi(h)](x)\Bigr)^{s}\,\mathrm{d}x & =
\int_{0}^{\infty}U(x) [\mathbf{G}(\varphi(h))^{s}](x)\,\mathrm{d}x
\\ & =  \int_{0}^{\infty}U(x) \exp\left(\int_{0}^{1}\ln
\Bigl(\varphi(h(xy))\Bigr)^{s}\,\mathrm{d}y\right)\,\mathrm{d}x.
\end{align*}Since
$$-\lambda = \int_{0}^{1}\ln y^{\lambda}\,\mathrm{d}y, \quad \textrm{for } \lambda > 0,$$
we obtain
\begin{align*}
& {\phantom{1}} \int_{0}^{\infty}U(x) \exp\left(\int_{0}^{1}\ln
\Bigl(\varphi(h(xy))\Bigr)^{s}\,\mathrm{d}y\right)\,\mathrm{d}x \\
& = \mathrm{e}^{\lambda} \int_{0}^{\infty}U(x)
\exp\left(\int_{0}^{1}\ln \left[y^{\lambda}\cdot
\Bigl(\varphi(h(xy))\Bigr)^{s}\right]\,\mathrm{d}y\right)\,\mathrm{d}x.
\end{align*}Applying Jensen's inequality again and interchanging the order of
integration we get
\begin{align*}
& {\phantom{1}} \mathrm{e}^{\lambda} \int_{0}^{\infty}U(x)
\exp\left(\int_{0}^{1}\ln y^{\lambda}\cdot
\Bigl(\varphi(h(xy))\Bigr)^{s}\,\mathrm{d}y\right)\,\mathrm{d}x
\\ & \leq \mathrm{e}^{\lambda} \int_{0}^{\infty}U(x)
\left(\int_{0}^{1}y^{\lambda}
\Bigl(\varphi(h(xy))\Bigr)^{s}\,\mathrm{d}y\right)\,\mathrm{d}x
\\ & =  \mathrm{e}^{\lambda} \int_{0}^{1}y^{\lambda}
\left(\int_{0}^{\infty}
U(x)\Bigl(\varphi(h(xy))\Bigr)^{s}\,\mathrm{d}x\right)\,\mathrm{d}y.
\end{align*}Substituting $t=xy$ and using Fubini's theorem one has
\begin{align*}
& {\phantom{1}} \mathrm{e}^{\lambda} \int_{0}^{1}y^{\lambda -1}
\left(\int_{0}^{\infty}
U\left(\frac{t}{y}\right)\Bigl(\varphi(h(t))\Bigr)^{s}\,\mathrm{d}t\right)\,\mathrm{d}y
\\ & =  \mathrm{e}^{\lambda}
\int_{0}^{\infty}\Bigl(\varphi(h(t))\Bigr)^{s}
\left(\int_{0}^{1}y^{\lambda -1}
U\left(\frac{t}{y}\right)\,\mathrm{d}y\right)\,\mathrm{d}t\\ & =
\mathrm{e}^{\lambda}
\int_{0}^{\infty}\Bigl(\varphi(h(t))\Bigr)^{s} \left(t^{\lambda}
\int_{t}^{\infty}\frac{U(x)}{x^{\lambda+1}}\,\mathrm{d}x\right)\,\mathrm{d}t.
\end{align*}Hence the result.
\end{proof}

Recall that the condition~(\ref{muckenhoupt}) was first
established in B.~Muckenhoupt's paper~\cite{muckenhoupt} for
$p=q$. Moreover, the condition~(\ref{muckenhoupt}) is necessary
and sufficient for~(\ref{eq2thm1}).

From the proof of Theorem~\ref{theorem1} (b) we immediately have
that for $\varphi\in\Phi$ the inequality
$$\left(\int_{0}^{\infty}U(x)\Bigl(\varphi([\mathbf{H}h](x))\Bigr)^{q}\,\mathrm{d}x
\right)^{1 \over q} \leq  \left(\int_{0}^{\infty}U(x)
\Bigr([\mathbf{G}(\varphi(h)](x)\Bigr)^{q}\,\mathrm{d}x\right)^{1
\over q}$$ holds which means that we may deal with the classical
non-weighted geometric mean operator and therefore we may use the
following well-known result for $[\mathbf{G}\varphi(h)]$ under the
condition
\begin{equation}\label{eqthm2}
\sup_{x>0} x^{-\frac{1}{p}}\left(\int_{0}^{x}U(x)
\left(\left[\mathbf{G}\left(\frac{1}{V}\right)\right](x)\right)^{q
\over p}\,\mathrm{d}t\right)^{1 \over q} < +\infty,
\end{equation} whenever $0< p\leq q < +\infty$, cf.~\cite{persson}, which implies the inequality
\begin{equation}\label{eq3thm2}
\left(\int_{0}^{\infty}U(x)
\Bigr([\mathbf{G}(\varphi(h)](x)\Bigr)^{q}\,\mathrm{d}x\right)^{1
\over q} \leq C \left(\int_{0}^{\infty}
V(x)\Bigr(\varphi(h(x))\Bigl)^{p}\,\mathrm{d}x\right)^{1 \over p}.
\end{equation}
Summarizing the above we get

\begin{thm}\label{thmPhi}
Let $0< p\leq q < +\infty$, and $w$ be a strictly positive
function on $(0,+\infty)$ with $W(+\infty)=+\infty$. Let $f, g$ be
non-negative functions on $(0,+\infty)$ and, moreover, let $g$ be
real continuous and strictly monotone on $(0,+\infty)$ such that
$g^{-1} \in \Phi$. If $u,v$ are weights on $(0,+\infty)$ with $U,
V$ given by~(\ref{U,V weights}) satisfying the
condition~(\ref{eqthm2}), then the inequality~(\ref{problem})
holds with a positive finite constant $C$.
\end{thm}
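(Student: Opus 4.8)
The plan is to assemble \Cref{thmPhi} from three ingredients that have already been laid out in the excerpt: the reduction lemma, the convexity trick for functions in the class $\Phi$, and the known sufficient condition for the geometric mean operator. Since $g^{-1}\in\Phi$ and we have set $\varphi:=g^{-1}$, the inequality~(\ref{problem}) is---via \Cref{reduction} with $w$ strictly positive and $W(+\infty)=+\infty$---equivalent to the reduced inequality~(\ref{levinson}) with the transplanted weights $U,V$ from~(\ref{U,V weights}). So the first step is simply to invoke \Cref{reduction} to pass from the weighted operator $\mathbf{M}^g_w$ to the non-weighted operator $\mathbf{M}^g$, writing $h=g\circ f$ and $\varphi=g^{-1}$, which turns the goal into bounding $\int_0^\infty U(x)\bigl(\varphi([\mathbf{H}h](x))\bigr)^q\,\mathrm{d}x$ by a constant times $\bigl(\int_0^\infty V(x)(\varphi(h(x)))^p\,\mathrm{d}x\bigr)^{q/p}$.

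The second step is the convexity estimate. For $\varphi\in\Phi$ the function $\psi=\ln\varphi$ is convex, and Jensen's inequality (\Cref{jensen}(i)) applied to the Hardy operator gives $\psi([\mathbf{H}h](x))\le[\mathbf{H}\psi(h)](x)$; exponentiating and recalling the definition~(\ref{geop}) yields
\begin{equation*}
\varphi([\mathbf{H}h](x))=\exp\bigl(\psi([\mathbf{H}h](x))\bigr)\le\exp\bigl([\mathbf{H}\psi(h)](x)\bigr)=[\mathbf{G}\varphi(h)](x),
\end{equation*}
exactly the pointwise bound already extracted from the proof of \Cref{theorem1}(b). Raising to the power $q$ and integrating against $U$ reduces matters to controlling the geometric mean operator, namely to showing
\begin{equation*}
\left(\int_0^\infty U(x)\bigl([\mathbf{G}\varphi(h)](x)\bigr)^q\,\mathrm{d}x\right)^{1/q}\le C\left(\int_0^\infty V(x)\bigl(\varphi(h(x))\bigr)^p\,\mathrm{d}x\right)^{1/p}.
\end{equation*}

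The third and final step is to quote the known strong-type bound~(\ref{eq3thm2}) for $[\mathbf{G}\varphi(h)]$, which holds for $0<p\le q<+\infty$ precisely under the condition~(\ref{eqthm2}) (this is the Perssontype criterion from~\cite{persson}). Chaining the convexity estimate with this inequality delivers the desired bound for $\varphi([\mathbf{H}h])$, and undoing the reduction via \Cref{reduction} returns inequality~(\ref{problem}) for $\mathbf{M}^g_w$ with the same finite constant $C$. Thus the proof is really an orchestration: there is no new hard computation, since every individual inequality has been prepared in advance. The one point requiring care---and the place where I would be most cautious---is verifying that the hypotheses line up across the three results: that the strict positivity of $w$ and $W(+\infty)=+\infty$ genuinely license \Cref{reduction}, that $g^{-1}\in\Phi$ is the hypothesis under which $\ln\varphi$ is convex so that Jensen applies, and that~(\ref{eqthm2}) is stated for exactly the weights $U,V$ produced by the reduction. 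Once these compatibility checks are in place, the statement follows by simply concatenating~(\ref{levinson}), the Jensen bound, and~(\ref{eq3thm2}).
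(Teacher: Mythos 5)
Your proposal is correct and takes essentially the same route as the paper: the author likewise chains Lemma~\ref{reduction} (with $w>0$, $W(+\infty)=+\infty$), the pointwise bound $\varphi([\mathbf{H}h](x))\leq[\mathbf{G}\varphi(h)](x)$ extracted from the proof of Theorem~\ref{theorem1}(b) via convexity of $\ln\varphi$ for $\varphi\in\Phi$, and the Persson--Stepanov criterion~(\ref{eqthm2}) implying~(\ref{eq3thm2}), then ``summarizes'' to obtain Theorem~\ref{thmPhi}. The hypothesis-compatibility checks you flag at the end are precisely the assumptions built into the statement, so nothing is missing.
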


\section{Fourth observation: Wedestig's approach}

Using the approach from~\cite{wedestig2} we have the following
similar result as Theorem~2.1 therein (observe only the sufficient
condition in our case as it is demonstrated in Example~\ref{ex}).

\begin{thm}
Let $1 < p \leq q < +\infty$, $s \in (1,p)$, and $g$ be a real
continuous either convex decreasing or concave increasing function
on $(a,b)$, where $-\infty \leq a < b \leq +\infty$. Put
\begin{equation}\label{A(s)}
A(s):=\sup_{t>0} \widetilde{V}(t)^{\frac{q(s-1)}{p}}
\left(\int_{t}^{\infty}U(x)\widetilde{V}(x)^{\frac{q(p-s)}{p}}\,\frac{\mathrm{d}x}{x^{q}}\right)^{1/q},
\end{equation} where $U, V$ are given by~(\ref{U,V weights}) for
weights $u,v$ on $(0,+\infty)$, $w$ is a strictly positive
function on $(0,+\infty)$ with $W(+\infty)=+\infty$, and
$$\widetilde{V}(t):=\int_{0}^{t}V(y)^{1-p'}\,\mathrm{d}y.$$ If $A(s)<
+\infty$, then the inequality~(\ref{problem}) holds for all $f$
such that $a < f(x) < b$ with $x\in [0,+\infty)$. Moreover, if $C$
is the best possible constant in~(\ref{problem}), then
\begin{equation}\label{Cbestposs}
C \leq \inf_{1 < s < p} \left(\frac{p-1}{p-s}\right)^{1/p'} A(s).
\end{equation}
\end{thm}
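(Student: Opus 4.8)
The plan is to use the reduction lemma (Lemma~\ref{reduction}) to pass from the weighted operator $\mathbf{M}^{g}_{w}$ to its non-weighted variant $\mathbf{M}^{g}$, so that it suffices to establish the corresponding inequality for $\mathbf{M}^{g}$ with the transformed weights $U,V$ defined in~(\ref{U,V weights}). First I would invoke Corollary~\ref{corjensen}: since $g$ is convex decreasing or concave increasing, we have the pointwise bound $[\mathbf{M}^{g}f](x)\leq[\mathbf{H}f](x)$, which reduces the whole problem to proving the strong type $(p,q)$ inequality for the ordinary Hardy averaging operator $\mathbf{H}$, namely
\begin{equation*}
\left(\int_{0}^{\infty}U(x)\bigl([\mathbf{H}h](x)\bigr)^{q}\,\mathrm{d}x\right)^{1/q}\leq C\left(\int_{0}^{\infty}V(x)h(x)^{p}\,\mathrm{d}x\right)^{1/p},
\end{equation*}
where $h=f\circ W^{-1}$. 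This is exactly the setting of Wedestig's paper~\cite{wedestig2}, so the core of the argument is to reproduce the sufficiency half of Theorem~2.1 therein.

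Next I would carry out the standard duality-and-splitting technique that underlies the Muckenhoupt–Wedestig scale of conditions. Writing $[\mathbf{H}h](x)=\frac{1}{x}\int_{0}^{x}h(t)\,\mathrm{d}t$, the plan is to insert the auxiliary weight $\widetilde{V}(t)=\int_{0}^{t}V(y)^{1-p'}\,\mathrm{d}y$ and apply Hölder's inequality with the parameter $s\in(1,p)$ to the inner integral, splitting the factor $h(t)V(t)^{1/p}$ against a compensating power of $\widetilde V$. Thus one estimates
\begin{equation*}
\int_{0}^{x}h(t)\,\mathrm{d}t=\int_{0}^{x}\bigl(h(t)V(t)^{1/p}\widetilde V(t)^{(s-1)/p}\bigr)\,\bigl(V(t)^{-1/p}\widetilde V(t)^{-(s-1)/p}\bigr)\,\mathrm{d}t,
\end{equation*}
and after applying Hölder with exponents $p$ and $p'$ the second factor integrates to a controlled power of $\widetilde V(x)$ (here the restriction $s<p$ guarantees integrability near the origin, producing the constant $\left(\frac{p-1}{p-s}\right)^{1/p'}$). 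Raising to the $q$-th power, multiplying by $U(x)/x^{q}$ and integrating in $x$, one then applies Fubini and recognizes the supremum defining $A(s)$ in~(\ref{A(s)}); finiteness of $A(s)$ delivers the bound with constant controlled by $\left(\frac{p-1}{p-s}\right)^{1/p'}A(s)$, and taking the infimum over $s\in(1,p)$ yields~(\ref{Cbestposs}).

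The delicate part of the argument is the bookkeeping of exponents in the Hölder step: one must choose the split of $\widetilde V$-powers so that the inner $p'$-integral collapses cleanly to $\widetilde V(x)^{(p-s)/p'}$ while the surviving $p$-integral, after the outer $q$-integration, assembles precisely into the weight combination $U(x)\widetilde V(x)^{q(p-s)/p}x^{-q}$ appearing in $A(s)$. Because $q\geq p$, the outer integral must be handled so that the $\ell^{q/p}$–$\ell^{(q/p)'}$ duality (or a direct monotonicity argument on $\widetilde V$) converts the running supremum into the single quantity $A(s)$; this is where the hypothesis $p\leq q$ is genuinely used, and it is the only place where a naive computation could go wrong. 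Once the Hardy inequality for $\mathbf{H}$ is secured with the stated constant, combining it with the pointwise Corollary~\ref{corjensen} bound and unwinding the reduction~(\ref{U,V weights}) back to the original weights $u,v$ and operator $\mathbf{M}^{g}_{w}$ completes the proof of~(\ref{problem}).
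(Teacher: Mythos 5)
Your proposal follows essentially the same route as the paper's proof: reduction via Lemma~\ref{reduction}, Jensen's inequality (equivalently Corollary~\ref{corjensen}) to pass to the Hardy operator, H\"older's inequality with the Wedestig parameter $s$ and the auxiliary weight $\widetilde V$ producing the constant $\left(\frac{p-1}{p-s}\right)^{1/p'}$, and then the swap of integration order that yields $A(s)$, which the paper carries out as Minkowski's integral inequality (valid since $q/p \geq 1$) and which you correctly identify as the place where $p \leq q$ is used. Apart from two harmless slips --- the collapsed inner factor is $\left(\frac{p-1}{p-s}\right)^{1/p'}\widetilde V(x)^{(p-s)/p}$ rather than $\widetilde V(x)^{(p-s)/p'}$, and it is Minkowski rather than Fubini that handles the outer integral when $q>p$ --- your argument matches the paper's.
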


\begin{proof} Using Lemma~\ref{reduction}, replacing $g(f)$ by $h$
and $g^{-1}$ by $\varphi$ we have the inequality~(\ref{levinson}).
Then applying Jensen's inequality to the left-hand side
of~(\ref{levinson}) we get
$$
\left(\int_{0}^{\infty}U(x)\Bigl(\varphi([\mathbf{H}h](x))\Bigr)^{q}\,\mathrm{d}x\right)^{1
\over q} \leq
\left(\int_{0}^{\infty}U(x)\Bigl([\mathbf{H}\varphi(h)](x)\Bigr)^{q}\,\mathrm{d}x\right)^{1
\over q}.$$ It is clear that if we prove the estimate
$$\left(\int_{0}^{\infty}U(x)\left(\frac{1}{x}\int_{0}^{x} \varphi(h(t))\,\mathrm{d}t
\right)^{q}\,\mathrm{d}x \right)^{1 \over q} \leq
\left(\int_{0}^{\infty}V(x)\Bigl(\varphi(h(x))\Bigr)^{p}\,\mathrm{d}x\right)^{1
\over p},$$ then we get the upper estimate in~(\ref{levinson}) and
consequently in~(\ref{problem}). For this purpose put
$$V(x)\Bigl(\varphi(h(x))\Bigr)^{p}=\varphi(k(x)).$$
Now the inequality~(\ref{levinson}) takes the form
\begin{equation}\label{levinsonmod}
\left(\int_{0}^{\infty}U(x)\left(\frac{1}{x}\int_{0}^{x}
V(t)^{-\frac{1}{p}} \Bigl(\varphi(k(t))\Bigr)^{1 \over
p}\,\mathrm{d}t \right)^{q}\,\mathrm{d}x \right)^{1 \over q} \leq
C \left(\int_{0}^{\infty} \varphi(k(x))\,\mathrm{d}x\right)^{1
\over p}.
\end{equation} Using H\"{o}lder's inequality with indices $p$ and $p'$ for the left-hand side integral
in~(\ref{levinsonmod}), we obtain \small{
\begin{align*}
& {\phantom{1}}
\left(\int_{0}^{\infty}U(x)\left(\frac{1}{x}\int_{0}^{x}
V(t)^{-\frac{1}{p}} \Bigl(\varphi(k(t))\Bigr)^{1 \over
p}\,\mathrm{d}t \right)^{q}\,\mathrm{d}x \right)^{1 \over q} \\ &
= \left(\int_{0}^{\infty}U(x)\left(\frac{1}{x}\int_{0}^{x}
\Bigl(\varphi(k(t))\Bigr)^{1 \over p}
\widetilde{V}(t)^{\frac{s-1}{p}} \widetilde{V}(t)^{-\frac{s-1}{p}}
V(t)^{-\frac{1}{p}} \,\mathrm{d}t \right)^{q}\,\mathrm{d}x
\right)^{1 \over q} \\ & \leq
\left(\int_{0}^{\infty}U(x)\left(\int_{0}^{x} \varphi(k(t))
\widetilde{V}(t)^{s-1}\,dt\right)^{q \over p} \left(\int_{0}^{x}
\widetilde{V}(t)^{-\frac{p'(s-1)}{p}}
V(t)^{-\frac{p'}{p}}\,\mathrm{d}t \right)^{q \over
p'}\,\frac{\mathrm{d}x}{x^{q}} \right)^{1 \over q}.
\end{align*}}\normalsize Using the fact that $V(t)^{-p'/p} = V(t)^{1-p'}$ the last integral is equivalent to
\small{
\begin{align}\label{int}
& {\phantom{1}} \left(\frac{p}{p-(s-1)p'}\right)^{1 \over
p'}\left(\int_{0}^{\infty}U(x)\widetilde{V}(x)^{\frac{p-(s-1)p'}{p}\cdot
\frac{q}{p'}}\left(\int_{0}^{x}
\varphi(k(t)) \widetilde{V}(t)^{s-1}\,\mathrm{d}t\right)^{q \over p}\,\frac{\mathrm{d}x}{x^{q}} \right)^{1 \over q} \nonumber \\
& = \left(\frac{p-1}{p-s}\right)^{1 \over
p'}\left(\int_{0}^{\infty}U(x)\widetilde{V}(x)^{\frac{q(p-s)}{p}}\left(\int_{0}^{x}
\varphi(k(t)) \widetilde{V}(t)^{s-1}\,\mathrm{d}t\right)^{q \over
p}\,\frac{\mathrm{d}x}{x^{q}} \right)^{1 \over q}.
\end{align}}\normalsize By Minkowski's inequality the integral~(\ref{int}) is dominated by
\begin{align*}
& {\phantom{1}} \left(\frac{p-1}{p-s}\right)^{1 \over
p'}\left(\int_{0}^{\infty} \varphi(k(t))
\widetilde{V}(t)^{s-1}\left(\int_{t}^{\infty}
U(x)\widetilde{V}(x)^{\frac{q(p-s)}{p}}\,\frac{\mathrm{d}x}{x^{q}}\right)^{p
\over q}\,\mathrm{d}t \right)^{1 \over p} \\ & \leq
\left(\frac{p-1}{p-s}\right)^{1 \over p'} A(s)
\left(\int_{0}^{\infty}\varphi(k(t))\,\mathrm{d}t\right)^{1 \over
p}.
\end{align*}Hence~(\ref{levinsonmod}), thus~(\ref{levinson}) and consequently by Lemma~\ref{reduction}
the inequality~(\ref{problem}) holds with a constant satisfying
the right hand side inequality in~(\ref{Cbestposs}).
\end{proof}

The following example was suggested by Prof. Alois Kufner via
e-mail communication with the author. It demonstrates that the
condition~(\ref{A(s)}) is not necessary for the
inequality~(\ref{problem}) to hold.

\begin{ex}\label{ex}
Let $g(x) = \sqrt{x}$, $U(x) = V(x) = x$ on $[0,+\infty)$ and
$p=q=2$. Then the inequality~(\ref{problem}) holds, but the
condition~(\ref{A(s)}) is not satisfied.
\end{ex}

\section*{Concluding remarks}

There are many various sufficient and/or necessary conditions
known in the literature for the strong type $(p,q)$ inequalities
involving the usual Hardy's operator, or the geometric mean
operator, see the reference list below. As far as we know, the
situation for weighted inequalities with other kind of means
different from the arithmetic and geometric one is far less known,
see the papers~\cite{ortega} and~\cite{jain} dealing with the
weighted Hardy type inequalities for the (non-weighted) harmonic
mean operator $$[\mathbf{T}f](x)= \frac{1}{x}\int_{0}^{x}
\frac{t}{f(t)}\,\mathrm{d}t$$ and the (non-weighted) power mean
operator $$[\mathbf{P}_\alpha f](x) =
\left(\frac{1}{x}\int_{0}^{x}
f^\alpha(t)\,\mathrm{d}t\right)^{\frac{1}{\alpha}}, \quad
\alpha>0,$$ respectively. All these means (i.e., arithmetic,
geometric, harmonic, power) are included in the family of
quasi-arithmetic means -- a construction which resembles our
definition of operator $\mathbf{M}^g_w$. Thus, $\mathbf{M}^g_w$
might be naturally called the \textit{weighted quasi-arithmetic
mean operator} (in the "continuous", i.e., integral form).

In this paper we have discussed some elementary direct methods to
find sufficient conditions for the general mean type
inequality~(\ref{problem}) to be valid. However, the obtained
results do not take the "mean function" $g$ into account, i.e.,
Muckenhoupt's as well as Wedestig's condition is, in fact,
independent on a particular choice of $g$. On the other hand,
Theorem~\ref{thmPhi} restricts the choice of "mean function" $g$
to the case $g^{-1}\in \Phi$ of Levinson's function class, but the
proof uses a certain transformation of the general mean operator
$\mathbf{M}^g$ to the geometric mean operator $\mathbf{G}$. Thus,
the obtained conditions do not seem to be the correct ones in
order to be able to find necessary and sufficient conditions for
the stated problem. We do suppose that the "ideal" conditions
should contain the function $g$. This motivates and encourages
development of new methods to deal with the problem of finding
sufficient, as well as necessary conditions for the validity of
inequality~(\ref{problem}) in its general form proposed in this
paper.

Since the operator $\mathbf{M}^g$ covers the classical integral
operators, immediately many questions naturally arise in
connection with this operator: to provide its boundedness,
compactness, continuity criteria and other properties on
appropriate function spaces when choosing different "mean
function" $g$. However, these (as well as other related) questions
wait for their development in the future.

\vspace{0.5cm}{\bf Acknowledgements:} We are grateful to the
referee for her/his suggestions which essentially improved
previous versions of the manuscript, especially for drawing the
author's attention to recent work of Kruli\'{c}~\cite{K} on the
topic of Hardy's inequality.

\vspace{5mm}

\noindent \small{\textsc{Ondrej Hutn\'ik}
\newline Institute of Mathematics, Faculty of Science, Pavol Jozef
\v Saf\'arik University in Ko\v sice,
\newline {\it Current address:} Jesenn\'a 5, SK 040~01 Ko\v sice,
Slovakia, \newline \phantom{{\it E-mail addresses:}}
ondrej.hutnik@upjs.sk

\end{document}